\newtheorem{thm}{Theorem}[section]
\newtheorem{lem}[thm]{Lemma}
\theoremstyle{definition}
\theoremstyle{remark}
\begin{document}
\title{Infinitary superperfect numbers\footnote{2010 Mathematics 
Subject Classification: 11A05, 11A25.}
\footnote{Key words: Odd perfect numbers; Super perfect numbers; Unitary divisors; Infinitary divisors; The sum of divisors.}}
\author{Tomohiro Yamada}
\date{}
\maketitle

\begin{abstract}
We shall show that $9$ is the only odd infinitary superperfect numbers.
\end{abstract}

\section{Introduction}\label{intro}
As usual, $\sigma(N)$ denotes the sum of divisors of a positive integer $N$.
$N$ is called to be perfect if $\sigma(N)=2N$.
It is a well-known unsolved problem whether or not
an odd perfect number exists.  Interest to this problem
has produced many analogous notions and problems concerning
divisors of an integer.
For example, Suryanarayana \cite{Sur} called $N$ to be
superperfect if $\sigma(\sigma(N))=2N$.  It is asked in this paper
and still unsolved whether there were odd superperfect numbers.

Some special classes of divisors have also been studied in several papers.
One of them is the class of unitary divisors defined by Eckford Cohen \cite{CohE}.
A divisor $d$ of $n$ is called a unitary divisor if $(d, n/d)=1$.
Wall \cite{Wal1} introduced the notion of biunitary divisors.
A divisor $d$ of a positive integer $n$ is called a biunitary divisor
if $\gcd(d, n/d)=1$.

Graeme L. Cohen \cite{CohG} generalized these notions.
We call any divisor of a positive integer $n$ to be a $0$-ary divisor of $n$
and a divisor $d$ of a positive integer $n$ to be a $k+1$-ary divisor
if $d$ and $n/d$ does not have a common $k$-ary divisor other than $1$.
Hence $1$-ary divisor is unitary and $2$-ary divisor is biunitary.
We note that a positive integer $d=\prod_i p_i^{f_i}$ with $p_i$ distinct primes and $f_i\geq 0$
is a $k$-ary divisor of $n=\prod_i p_i^{e_i}$ if and only if
$p_i^{f_i}$ is a $k$-ary divisor of $p_i^{e_i}$ for each $i$.
G. L. Cohen \cite[Theorem 1]{CohG} showed that, if $p^f$ is a $e-1$-ary divisor of $p^e$,
then $p^f$ is a $k$-ary divisor of $p^e$ for any $k\geq e-1$
and called such a divisor to be an infinitary divisor.
For any positive integer $n$, a divisor $d=\prod_i p_i^{f_i}$ of $n=\prod_i p_i^{e_i}$
is called to be an infinitary divisor if $p_i^{f_i}$ is an infinitary divisor of $p_i^{e_i}$ for each $i$,
which is written as $d\mid_\infty n$.

According to E. Cohen \cite{CohE}, Wall \cite{Wal1} and G. L. Cohen \cite{CohG} respectively,
henceforth $\sigma^*(N), \sigma^{**}(N)$ and $\sigma_\infty(n)$ denote the sum of unitary, biunitary
and infinitary divisors of $N$.

Replacing $\sigma$ by $\sigma^*$, Subbarao and Warren \cite{SW}
introduced the notion of a unitary perfect number.  $N$
is called to be unitary perfect if $\sigma^*(N)=2N$.  They proved
that there are no odd unitary perfect numbers and $6, 60, 90, 87360$
are the first four unitary perfect numbers.  Later the fifth unitary perfect number
has been found by Wall \cite{Wal2}, but no further instance has been found.
Subbarao \cite{Sub} conjectured that there are only finitely many
unitary perfect numbers.
Similarly, a positive integers $N$ is called biunitary perfect if $\sigma^{**}(N)=2N$.
Wall \cite{Wal1} showed that $6, 60$ and $90$, the first three unitary perfect numbers,
are the only biunitary perfect numbers.

G. Cohen \cite{CohG} introduced the notion of infinitary perfect numbers;
a positive integer $n$ is called infinitary perfect if $\sigma_\infty(n)=2n$.
Cohen \cite[Theorem 16]{CohG} shows that $6, 60$ and $90$, exactly all of the biunitary perfect numbers,
are also all of the infinitary perfect numbers not divisible by $8$.
Cohen gave $14$ infinitary perfect numbers and Pedersen's database,
which is now available at \cite{Moe}, contains $190$ infinitary perfect numbers.

Combining the notion of superperfect numbers and the notion of unitary divisors,
Sitaramaiah and Subbarao \cite{SS} studied unitary superperfect numbers,
integers $N$ satisfying $\sigma^*(\sigma^*(N))=2N$.  They found all unitary superperfect
numbers below $10^8$.  The first ones are $2, 9, 165, 238$.  Thus
there are both even and odd ones.  The author \cite{Ymd1} showed that
$9, 165$ are the all odd ones.

Now we can call an integer $N$ satisfying $\sigma_\infty(\sigma_\infty(N))=2N$
to be infinitary superperfect.
We can see that $2$ and $9$ are infinitary superperfect, while $2$ is also superperfect
(in the ordinary sense) and $9$ is also unitary superperfect.
Below $2^{28}$, we can find some integers $n$ dividing $\sigma_\infty(\sigma_\infty(n))$
but we cannot find any other infinitary superperfect numbers.

Analogous to \cite{Ymd1}, we can show that following result.

\begin{thm}\label{th1}
$9$ is the only odd infinitary superperfect number.
\end{thm}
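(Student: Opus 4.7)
Let $N$ be an odd solution of $\sigma_\infty(\sigma_\infty(N))=2N$, and set $M:=\sigma_\infty(N)$ so that $\sigma_\infty(M)=2N$. My plan is first to pin down the shape of $M$ via a $2$-adic argument, and then to solve the resulting rigid Diophantine problem. Recalling that for an odd prime $p$ with binary expansion $e=\sum_{j}2^{a_{j}}$ one has $\sigma_\infty(p^{e})=\prod_{j}(1+p^{2^{a_{j}}})$, a product of even integers, we see that if $M=2^{a}M_{1}$ with $M_{1}$ odd, then $\sigma_\infty(M)=\sigma_\infty(2^{a})\sigma_\infty(M_{1})=2N$, while $\sigma_\infty(2^{a})=\prod_{j}(1+2^{2^{b_{j}}})$ is odd. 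This forces $v_{2}(\sigma_\infty(M_{1}))=1$; but $\sigma_\infty(M_{1})$ is itself a product of even factors $1+q^{2^{c}}$, one per infinitary prime-power of $M_{1}$, so $M_{1}$ has at most one such factor. Hence $M_{1}=1$ or $M_{1}=q^{2^{k}}$ for some odd prime $q$ and integer $k\geq 0$, and the alternative $M_{1}=1$ is ruled out because it would make $\sigma_\infty(M)$ odd. Thus $M=2^{a}q^{2^{k}}$ with $a\geq 1$.

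From this shape, the identity $\sigma_\infty(M)=2N$ gives the explicit formula
\[
N \;=\; \sigma_\infty(2^{a})\cdot\frac{1+q^{2^{k}}}{2},
\]
in which the fraction is a positive odd integer (automatic for $k\geq 1$, since odd squares are $\equiv 1\pmod 8$; requiring $q\equiv 1\pmod 4$ for $k=0$). Conversely, $\sigma_\infty(N)=2^{a}q^{2^{k}}$ says that for every prime $p\mid N$ and every $2^{b}$ in the binary expansion of $v_{p}(N)$, the factor $1+p^{2^{b}}$ appearing in $\sigma_\infty(N)$ must itself be of the form $2^{\alpha}q^{\beta}$. These two constraints mutually determine the possible primes of $N$ very rigidly.

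The remaining work, which I expect to be the main obstacle, is to combine these two constraints to force $(a,k,q)=(1,0,5)$, and hence $N=9$. I would first treat $k=0$ and small $a$: for $a=1$, the identity $N=3(q+1)/2$ together with $\sigma_\infty(N)=2q$ yields a chain of $3$-adic divisibility conditions on $(q+1)/2$ that, once $q$ is required to be prime, collapse to $q=5$ and $N=9$ (the case $v_{3}(N)\leq 1$ forces a non-integer abundancy, while $v_{3}(N)=2$ leads to $\sigma_\infty(r')=(6r'-1)/5$, whose arithmetic forces $5\mid q$, and higher $v_{3}(N)$ fails a $2$-adic parity check). Larger $a$ are excluded because the additional Fermat-type factors in $\sigma_\infty(2^{a})$ introduce further primes into $N$ and inflate $\sigma_\infty(N)$ beyond $2^{a}q^{2^{k}}$; for $k\geq 1$ the further requirement that $v_{q}(\sigma_\infty(N))=2^{k}$, combined with order-theoretic congruences on $q$ modulo powers of $2$ coming from $q\mid 1+p^{2^{b}}$, removes the remaining configurations. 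The bulk of the technical work parallels the case analysis of \cite{Ymd1} for unitary superperfect numbers.
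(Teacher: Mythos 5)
Your opening reduction is correct and coincides with the paper's first step (Lemma~\ref{a}): writing $M=\sigma_\infty(N)$, each odd infinitary prime-power component of $M$ contributes at least one factor of $2$ to $\sigma_\infty(M)=2N$, while $\sigma_\infty(2^a)$ is odd, so $M=2^aq^{2^k}$. The genuine gap is everything after that. You explicitly defer what you yourself call ``the main obstacle'' --- deducing $N=9$ from $\sigma_\infty(N)=2^aq^{2^k}$ --- to a sketched case analysis, and the sketch does not contain the ideas that make this step work. For $k\geq 1$, every odd prime $p$ dividing $q^{2^k}+1$ must divide $2N=\sigma_\infty(2^a)(q^{2^k}+1)$ and hence $N$, and each infinitary component $p^{2^b}$ of $N$ must then satisfy $p^{2^b}+1=2q^{t}$. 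Controlling the solutions of these equations is not a matter of ``order-theoretic congruences on $q$ modulo powers of $2$'': the paper needs St{\o}rmer's theorems on $x^2+1=2y^n$ and on the Pell equation (Lemma~\ref{e}), Fermat's right triangle theorem to exclude $x^4+1=2y^2$, and a separate counting argument on the solutions of $X^{2^{k+1}}\equiv 1\pmod{q}$ (Lemma~\ref{f}) to rule out $p^{2^b}+1=2q$ with $b>0$. None of these inputs appear in your plan, and without them the case $k\geq 1$ does not close.

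The case $k=0$, $a>1$ is also not disposed of by your stated reason (``inflating $\sigma_\infty(N)$ beyond $2^aq^{2^k}$''): the actual argument shows that $\sigma_\infty(2^a)$ must be $p$ or $3p$ for a Fermat prime $p\equiv 1\pmod 4$, splits on whether $p\mid q+1$, and reaches contradictions by a congruence modulo $p^{e-1}$ in one branch and by direct computation for $a=2,3$ in the other. Your treatment of $k=0$, $a=1$ looks salvageable (the paper's version is cleaner: $\sigma_\infty(N)=2q$ forces $N=3^{2^u}$ and then $3^{2^u}+1=4\cdot 3^{2^u-1}-2$ gives $u=1$), but as written the proposal is a correct first reduction followed by an outline that underestimates, and does not supply, the Diophantine machinery the theorem actually requires.
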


We can see that this immediately follows from the following result.

\begin{thm}\label{th2}
If $N$ is an infinitary superperfect number with $\omega(\sigma_\infty(N))\leq 2$,
then $N=2$ or $N=9$.
\end{thm}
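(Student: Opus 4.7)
Set $M = \sigma_\infty(N)$, so that $\sigma_\infty(M) = 2N$ with $\omega(M) \le 2$. The plan is to exhaust the possibilities for $M$ using two tools: the factorisation
\[
\sigma_\infty(p^e) = \prod_{j \in S(e)}\bigl(p^{2^j} + 1\bigr),
\]
where $S(e)$ is the set of bit positions in the binary expansion of $e$, together with the parity/coprimality facts that $v_2(p^{2^j}+1) = 1$ whenever $p$ is an odd prime and $j \ge 1$, and that the Fermat numbers $F_j = 2^{2^j}+1$ are odd and pairwise coprime.

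First I treat $\omega(M) = 1$, $M = p^a$. If $p$ is odd, then $\sigma_\infty(N)$ is odd; since any odd prime divisor of $N$ contributes an even factor to $\sigma_\infty(N)$, this forces $N = 2^e$. Analysing $\sigma_\infty(p^a) = 2^{e+1}$ via the parity facts pins down $a = 1$ and $p = 2^{e+1}-1$ Mersenne, while $\sigma_\infty(2^e) = \sigma(2^e) = 2^{e+1}-1$ forces $e = 2^k-1$, so $p = 2^{2^k}-1$ is prime only for $k = 1$, giving $N = 2$. If $p = 2$, the same parity argument forces $N$ to be a squarefree odd product of Mersenne primes, so $\sigma_\infty(M) = \sigma_\infty(2^a)$ is an odd product of Fermat numbers, contradicting $2N$ even.

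Next I handle $\omega(M) = 2$, $M = p^a q^b$ with $p<q$. When $p$ is odd one again has $N = 2^e$ and $\prod_{c \in S(e)} F_c = p^a q^b$; pairwise coprimality of Fermat numbers forces $|S(e)| \le 2$, and the parity argument applied to $\sigma_\infty(M) = 2^{e+1}$ yields $a = b = 1$ with $p, q$ Mersenne. If $|S(e)| = 2$, then $p, q$ are also Fermat primes, so $p = q = 3$---impossible. If $|S(e)| = 1$, then $p + q = 2^{2^c} - 2$ and $pq = 2^{2^c}+1$, whose discriminant $2^{2^c+3}(2^{2^c-3}-1)$ has odd $2$-adic valuation $2^c + 3$ and so cannot be a perfect square. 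Hence $p = 2$, and I split on the parity of $N$. If $N = 2^f R$ with $f \ge 1$ and $R$ odd, the odd factor $\sigma_\infty(2^f)$ of $M = 2^a q^b$ is a product of pairwise coprime Fermat numbers dividing $q^b$, forcing $f = 2^c$ with $F_c$ a power of $q$---hence $q$ a Fermat prime---and a $2$-adic valuation comparison in $\sigma_\infty(M) = 2^{f+1}R$ rules this out.

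The main case, and the principal obstacle, is $M = 2^a q^b$ with $N$ odd. Here $v_2(M) = a \ge 1$ and $v_2(2N) = 1$ combine to force $v_2(\sigma_\infty(q^b)) = 1$; by the parity facts this gives either $b = 1$ with $q \equiv 1 \pmod 4$, or $b = 2^d$ for some $d \ge 1$. In either case $N = \sigma_\infty(2^a) \cdot \sigma_\infty(q^b)/2$ is determined; writing $N = \prod r_i^{e_i}$ with the $r_i$ odd primes, every factor $r_i^{2^j}+1$ appearing in $\sigma_\infty(N) = 2^a q^b$ must be $\{2, q\}$-smooth. A careful joint tracking of $v_2$ and $v_q$ across the equations $\sigma_\infty(N) = M$ and $\sigma_\infty(M) = 2N$, exploiting the coincidence $\sigma_\infty(3^2) = 10 = 2 \cdot 5$ together with $(5+1)/2 = 3$, pins down $r_i = 3$, $e_i = 2$, $a = b = 1$, $q = 5$, giving $N = 9$. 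To rule out every alternative choice of odd prime base $r_i$ and exponent $e_i$ in full generality, primitive-divisor (Bang/Zsigmondy) bounds on $r^{2^j} + 1$ will be needed beyond direct enumeration for small $a$.
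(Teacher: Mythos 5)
Your treatment of the easier configurations is sound and runs roughly parallel to the paper: the prime-power/odd cases for $M=\sigma_\infty(N)$ force $N$ to be a power of $2$ and then $N=2$, and your discriminant computation for $|S(e)|=2$ versus $|S(e)|=1$ is a pleasant alternative to the paper's congruence argument (Fermat prime factors are $\equiv 1\pmod 4$, Mersenne primes $>3$ are $\equiv 3\pmod 4$). The even-$N$ subcase of $M=2^aq^b$ is under-argued but repairable.

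The genuine gap is the case you yourself flag as "the principal obstacle": $M=2^aq^b$ with $N$ odd. Here you assert that "a careful joint tracking of $v_2$ and $v_q$ \ldots pins down" $N=9$, and you concede that ruling out the alternatives "will be needed beyond direct enumeration." That is the entire content of the theorem in this case, and it is not supplied. Moreover, the tool you propose to supply it with --- Bang/Zsigmondy primitive divisors --- is not sufficient. Once every factor $r_i^{2^j}+1$ must be $\{2,q\}$-smooth, pairwise coprimality already limits each odd prime $r_i$ of $N$ to essentially one equation $r_i^{2^k}+1=2q^t$; the difficulty is in solving these Lebesgue--Ramanujan--Nagell type equations and in showing that the resulting primes cannot jointly reconstitute $q^{2^l}+1$ and $\sigma_\infty(2^a)$ so as to return $2N$. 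The paper does this with St{\o}rmer's theorems that $x^2+1=2y^n$ ($n>1$ odd) has only the trivial solution and that $x^2-2q^2y^2=-1$ has at most one relevant solution, with Fermat's right-triangle theorem to kill $x^4+1=2y^2$, and with a bespoke counting argument (Lemma \ref{f}) bounding the number of solutions of $X^{2^{k+1}}\equiv 1\pmod q$ in $[1,q-1]$ to force $(p,q)=(3,5)$ when $p^{2^k}+1=2q$. None of this is reachable from primitive-divisor bounds alone, so as written your argument establishes the theorem only outside its central case.
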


Indeed, if $N$ is odd and $\sigma_\infty(\sigma_\infty(N))=2N$, then
we have $\omega(N)\leq 2$ as shown in the next section.

Our method does not seem to work to find all odd super perfect numbers
since $\sigma(\sigma(N))=2N$ does not seem to imply that $\omega(\sigma(N))\leq 2$.
Even assuming that $\omega(\sigma(N))\leq 2$, the property of $\sigma$ that
$\sigma(p^e)/p^e>1+1/p$ prevents us from showing that $\sigma(\sigma(N))<2$.
All that we know is the author's result in \cite{Ymd2} that there are only finitely many
odd superperfect numbers $N$ with $\omega(\sigma(N))\leq k$ for each $k$.
For the biunitary analogues, the author \cite{Ymd3} showed that
$2$ and $9$ are the only integers $N$ (even or odd!) such that $\sigma^{**}(\sigma^{**}(N))=2N$.

In Tabel \ref{tbl1}, we give all integers $N\leq 2^{28}$ dividing $\sigma_\infty(\sigma_\infty(N))$.
We found no other infinitary superperfect numbers other than $2$ and $9$,
while we found several integers $N$ dividing $\sigma_\infty(\sigma_\infty(N))$.
From this table, we are led to conjecture that $2$ is the only even infinitary superperfect number.
On the other hand, it seems that for any integer $k\geq 3$, there exist infinitely many integers $N$
for which $\sigma_\infty(\sigma_\infty(N))=kN$.

\section{Preliminary Lemmas}\label{lemmas}
In this section, we shall give several preliminary lemmas concerning the sum of
infinitary divisors used to prove our main theorems.

We begin by introducing Theorem 8 of \cite{CohG}:
writing binary expansions of $e, f$ as $e=\sum_{i\in I} 2^i$
and $f=\sum_{j\in J} 2^j$, $p^f$ is an infinitary divisor of $p^e$
if and only if $J$ is a subset of $I$.
Hence, factoring $n=\prod_{i=1}^r p_i^{e_i}$ and
writing a binary expansion of each $e_i$ as $e_i=\sum_j y_{ij} 2^j$ with $y_{ij}\in \{0, 1\}$,
we observe that, as is shown in \cite{CohG}[Theorem 13],
\begin{equation}\label{eq0}
\sigma_\infty(n)=\prod_{i=1}^r \prod_{y_{ij}=1}\left(1+p_i^{2^j}\right).
\end{equation}

\begin{lem}\label{a}
Factor $n=\prod_{i=1}^r p_i^{e_i}$ and let $l(e_i)$ denote the number of $1$'s in the binary expansion of $e_i$.
$\sigma_\infty(n)$ is divisible by $2$ at least $\sum_{p_i\neq 2}l(e_i)\geq \omega(n)-1$ times.
In particular, $\sigma_\infty(n)$ is odd if and only if $n$ is a power of $2$.
\end{lem}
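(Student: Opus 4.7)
The plan is to read off the divisibility directly from the product formula \eqref{eq0} stated just above the lemma. The point is that each factor appearing in \eqref{eq0} has the form $1+p_i^{2^j}$, and whenever $p_i$ is an odd prime, $p_i^{2^j}$ is odd, so $1+p_i^{2^j}$ is automatically even. Counting one factor of $2$ per such term gives the stated lower bound, and the ``if and only if'' is an immediate consequence.

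More concretely, I would proceed as follows. First, fix an index $i$ with $p_i\neq 2$. The inner product $\prod_{y_{ij}=1}(1+p_i^{2^j})$ has exactly $l(e_i)$ factors (one for each $1$ in the binary expansion of $e_i$), and each factor is even. Hence this inner product contributes at least $2^{l(e_i)}$ to $\sigma_\infty(n)$. Multiplying over all $i$ with $p_i\neq 2$ shows that $\sigma_\infty(n)$ is divisible by $2^{\sum_{p_i\neq 2}l(e_i)}$, which gives the first claim.

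For the bound $\sum_{p_i\neq 2}l(e_i)\geq \omega(n)-1$, I would simply note that every $e_i\geq 1$ forces $l(e_i)\geq 1$, so the sum is at least the number of odd prime divisors of $n$. Since dropping $p=2$ from the count removes at most one prime, this number is at least $\omega(n)-1$. Finally, $\sigma_\infty(n)$ is odd exactly when the sum $\sum_{p_i\neq 2}l(e_i)$ is zero, which happens precisely when $n$ has no odd prime divisors, i.e.\ when $n$ is a power of $2$ (including $n=1$). There is no real obstacle here; the only point to be careful about is distinguishing the two cases $2\mid n$ and $2\nmid n$ when converting the count of odd prime divisors into $\omega(n)-1$.
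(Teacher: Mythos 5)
Your proposal is correct and follows exactly the same route as the paper: read the $2$-divisibility off the product formula \eqref{eq0}, noting that each factor $1+p_i^{2^j}$ with $p_i$ odd is even, and then use $l(e_i)\geq 1$ to get $\sum_{p_i\neq 2}l(e_i)\geq\omega(n)-1$. The only (harmless) extra care you take is the explicit remark that when the sum is zero, $n$ is a power of $2$ and $\sigma_\infty(2^e)$ is a product of odd factors, hence odd, which the paper leaves implicit.
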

\begin{proof}
For each prime factor $p_i$, write a binary expansion of each $e_i$ as $e_i=\sum_j y_{ij} 2^j$ with $y_{ij}\in \{0, 1\}$.
Hence $l(e_i)=\sum_j y_{ij}$ holds for each $i$.
Unless $p_i=2$, $p_i^{2^j}+1$ is even for any $j\geq 0$.
By (\ref{eq0}), each product $\sigma_\infty(p_i^{e_i})=\prod_{y_{ij}=1}\left(1+p_i^{2^j}\right)$ except $p_i=2$
is divisible by $2$ at least $l(e_i)$ times and $\sigma_\infty(n)$
at least $\sum_{p_i\neq 2}l(e_i)$ times.
We can easily see that $\sum_{p_i\neq 2}l(e_i)\geq \omega(n)-1$ since $l(m)>0$ for any nonzero integer $m$.
\end{proof}

\begin{lem}\label{b}
If $l>0$, then any prime factor of $2^{2^l}+1$ is congruent to $1\pmod{4}$.
If $l>0$ and $p$ is an odd prime, then
$p^{2^l}+1\equiv 2\pmod{4}$ and any odd prime factor of $p^{2^l}+1$
is congruent to $1\pmod{4}$.
\end{lem}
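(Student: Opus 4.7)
The plan is to prove all three assertions using a multiplicative-order argument modulo a prime divisor, which is the standard tool for understanding prime factors of numbers of the form $a^{2^l}+1$.

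First I would handle the statement $p^{2^l}+1\equiv 2\pmod 4$ for odd $p$ and $l\geq 1$. Since $p$ is odd, $p^2\equiv 1\pmod 8$, hence $p^{2^l}=(p^2)^{2^{l-1}}\equiv 1\pmod 4$, and adding $1$ gives $p^{2^l}+1\equiv 2\pmod 4$. This also shows that any odd prime factor of $p^{2^l}+1$ is odd, so the claim about ``odd prime factors'' in part (3) is meaningful.

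Next I would dispatch the prime-factor claims in (1) and (3) uniformly. Let $a$ be either $2$ (for part (1)) or an odd prime $p$ (for part (3)), and let $q$ be a prime divisor of $a^{2^l}+1$; in the odd-$p$ case, assume further that $q$ is odd so that $a=p\not\equiv 0\pmod q$ (in the $a=2$ case $q$ is automatically odd since $2^{2^l}+1$ is odd). Then $a^{2^l}\equiv -1\pmod q$, whence $a^{2^{l+1}}\equiv 1\pmod q$. The multiplicative order of $a$ modulo $q$ therefore divides $2^{l+1}$ but not $2^l$, so it is exactly $2^{l+1}$. Since the order divides $q-1$, this forces $2^{l+1}\mid q-1$. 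For $l\geq 1$ we have $2^{l+1}\geq 4$, so $q\equiv 1\pmod 4$.

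No part of this looks like a genuine obstacle: the only thing one has to be a little careful about is the coprimality of $a$ and $q$ in the odd-prime case, which is why the statement restricts to \emph{odd} prime factors of $p^{2^l}+1$ (an odd prime factor automatically cannot equal $p$ since $p\mid p^{2^l}+1$ would give $p\mid 1$). Putting these three pieces together yields the lemma.
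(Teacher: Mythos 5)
Your proof is correct. The only point of difference from the paper is the tool used for the congruence $q\equiv 1\pmod 4$: the paper observes that $a^{2^l}\equiv -1\pmod q$ exhibits $-1$ as a square modulo $q$ (since $l>0$ makes $a^{2^l}=(a^{2^{l-1}})^2$), and then invokes the first supplementary law of quadratic reciprocity, whereas you run the multiplicative-order argument directly: the order of $a$ modulo $q$ is exactly $2^{l+1}$ and divides $q-1$. The two are essentially two faces of the same fact, but your version is self-contained (no appeal to the supplementary law) and actually proves the stronger statement $q\equiv 1\pmod{2^{l+1}}$, which the paper does not need. Your handling of the side issues is also fine: the coprimality of $p$ and an odd prime factor $q$ of $p^{2^l}+1$, and the computation $p^{2^l}+1\equiv 2\pmod 4$ via $p^2\equiv 1\pmod 8$, match the paper's remark that $x^2+1\equiv 2\pmod 4$ for odd $x$. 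One sentence of yours is redundant (``any odd prime factor \ldots is odd''), presumably meaning that $p^{2^l}+1$ contributes only a single factor of $2$, but this does not affect correctness.
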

\begin{proof}
These statements immediately follow from the first supplementary law (see, for example,
Theorem 82 of \cite{HW}) and observing that $x^2+1\equiv 2\pmod{4}$ for any odd integer $x$.
\end{proof}

The following two lemmas follow almost immediately from Bang's result \cite{Ban}.
But we shall include direct proofs.

\begin{lem}\label{c}
If $p$ is a prime and $\sigma_\infty(p^e)$ is a prime power,
then $p$ is a Mersenne prime and $e=1$
or $p=2, e=2^l$ and $\sigma_\infty(p^e)$ is a Fermat prime.
\end{lem}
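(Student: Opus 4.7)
The plan is to apply formula \eqref{eq0} to rewrite
$$\sigma_\infty(p^e) = \prod_{j \in J}\bigl(1 + p^{2^j}\bigr),$$
where $J$ is the index set of the binary expansion $e = \sum_{j \in J} 2^j$, and then to analyze, case by case on the parity of $p$, when this product can be a prime power. In either case, the assumption forces every factor $1 + p^{2^j}$ with $j \in J$ to be a power of the same prime $q$.

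For $p = 2$, I would invoke the classical pairwise coprimality of Fermat numbers $F_j = 2^{2^j}+1$, which follows at once from the identity $F_0 F_1 \cdots F_{n-1} = F_n - 2$ together with the fact that each $F_j$ is odd. If $|J| \geq 2$, distinct factors $F_j$ contribute coprime, nontrivial divisors, so the product carries at least two distinct prime factors, a contradiction. Hence $|J| = 1$, i.e.\ $e = 2^l$ for some $l \geq 0$, and the sole factor $2^{2^l}+1$ must itself be prime, i.e.\ a Fermat prime.

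For $p$ odd, every factor $1 + p^{2^j}$ is even, so $q$ must equal $2$ and each factor must be a power of $2$. Lemma \ref{b} gives $p^{2^j}+1 \equiv 2 \pmod 4$ for every $j \geq 1$; combined with $p^{2^j}+1 \geq p^2 + 1 > 2$, the factor then carries an odd divisor strictly greater than $1$, so it cannot be a power of $2$. Thus $J \subseteq \{0\}$, and since $e \geq 1$ we conclude $e = 1$ with $p + 1$ a power of $2$, i.e.\ $p$ is a Mersenne prime.

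I do not expect a substantive obstacle: the whole argument is a short case split driven by \eqref{eq0} and Lemma \ref{b}, together with the elementary coprimality of Fermat numbers. The only subtlety worth stating explicitly in the write-up is why a positive integer that is both congruent to $2 \pmod 4$ and strictly larger than $2$ cannot be a power of $2$; this is what rules out the indices $j \geq 1$ in the odd-prime case and pins down $e = 1$.
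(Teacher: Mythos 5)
Your treatment of the odd case is correct and matches the paper's: each factor $p^{2^j}+1$ is even, hence must be a power of $2$, and for $j\geq 1$ the congruence $p^{2^j}+1\equiv 2\pmod 4$ together with $p^{2^j}+1>2$ forces an odd divisor greater than $1$; so $e=1$ and $p$ is Mersenne. Your use of the pairwise coprimality of Fermat numbers to get $|J|=1$ in the case $p=2$ is also fine (and slightly cleaner than what the paper leaves implicit).

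However, there is a genuine gap at the very last step of the $p=2$ case. From $e=2^l$ you only know that $\sigma_\infty(2^e)=2^{2^l}+1$ is a \emph{prime power} $q^f$; you assert without argument that it ``must itself be prime.'' That is exactly the nontrivial content here, and the conclusion matters: the lemma is later invoked in the form ``if $2^{2^v}+1$ is a power of $p_i$ then $2^{2^v}+1=p_i$.'' Ruling out $2^{2^l}+1=q^f$ with $f>1$ is a special case of Catalan's equation and is where the paper spends most of its proof: writing $q^f-1=2^{2^l}=(q-1)(q^{f-1}+\cdots+1)$, both factors must be powers of $2$; since $q^{f-1}+\cdots+1\equiv f\pmod{q-1}$, the exponent $f$ must itself be a power of $2$, whence $q^{f-1}+\cdots+1=(q+1)(q^2+1)\cdots(q^{f/2}+1)$ and the factor $q^2+1\equiv 2\pmod 4$ cannot be a power of $2$ unless $f=2$; then $q-1$ and $q+1$ both powers of $2$ give $q=3$, and $3^2-1=2^3$ is not of the form $2^{2^l}$. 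You need to supply this (or an equivalent) argument; as written, your proof establishes only that $\sigma_\infty(2^{2^l})$ is a prime power of a single Fermat number, not that it is a Fermat prime.
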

\begin{proof}
If $p$ is an odd integer and $p^{2^l}+1$ is a prime power,
then $l=0$ and $p+1$ is a power of $2$ since otherwise $p^{2^l}+1\equiv 2\pmod{4}$
cannot be a prime power.

If $2^{2^l}+1=q^f$ for some $q$ and $f>1$, then $q^f-1=2^{2^l}$
and $(q-1)(q^{f-1}+q^{f-2}+\cdots +1)=2^{2^l}$.
Hence both $q-1$ and $q^{f-1}+q^{f-2}+\cdots +1$ must be a power of $2$.
Since $q^{f-1}+q^{f-2}+\cdots +1\equiv f\pmod{q-1}$, $f=2^k$ must be also
a power of $2$. Hence $q^{f-1}+q^{f-2}+\cdots +1=(q+1)(q^2+1)\cdots (q^{2^k-1}+1)$
and each factor must be also a power of $2$.  However, $q^2+1\equiv 2\pmod{4}$
cannot be a power of $2$.  Thus $k=1, f=2$ and both $q-1$ and $q+1$ is a power of $2$.
This implies $q=3$.  However, in this case, $q^f-1=3^2-1=2^3$ is
not of a form $2^{2^l}$.  Hence we cannot have $2^{2^l}+1=q^f$ with $f>1$.
\end{proof}

\begin{lem}\label{d}
$\sigma_\infty(2^e)$ has at least $l(e)$ distinct prime factors.
If $p$ is an odd prime, then $\sigma_\infty(p^e)$ has at least $l(e)+1$ distinct prime factors.
\end{lem}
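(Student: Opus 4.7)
The plan is to use the explicit factorization from~(\ref{eq0}),
\[
\sigma_\infty(p^e) \;=\; \prod_{j \in J}\bigl(p^{2^j}+1\bigr), \qquad |J|=l(e),
\]
where $J$ is the support of the binary expansion of $e$, and then count distinct prime divisors by showing that the factors in this product are almost pairwise coprime.

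The first step is a near-coprimality statement: for $i<j$ the telescoping identity
\[
p^{2^j}-1 \;=\; (p-1)\prod_{k=0}^{j-1}\bigl(p^{2^k}+1\bigr)
\]
shows that $(p^{2^i}+1)\mid(p^{2^j}-1)$, so $\gcd(p^{2^i}+1,p^{2^j}+1)$ divides $\gcd(p^{2^j}-1,p^{2^j}+1)=\gcd(2,p^{2^j}+1)$, which equals $1$ if $p=2$ and $2$ if $p$ is odd. Hence the factors of $\sigma_\infty(p^e)$ indexed by $J$ are pairwise coprime when $p=2$ and pairwise coprime apart from the prime $2$ when $p$ is odd.

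For $p=2$ each factor $2^{2^j}+1$ is at least $3$ and the factors are pairwise coprime, so each one supplies at least one prime divisor and the primes so obtained are distinct, giving the claimed $|J|=l(e)$ distinct prime divisors of $\sigma_\infty(2^e)$.

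For odd $p$, Lemma~\ref{b} gives for each $j\geq 1$ that $p^{2^j}+1=2m_j$ with $m_j\geq 5$ odd and every prime factor of $m_j$ congruent to $1\pmod 4$. The near-coprimality forces $\gcd(m_i,m_j)=1$ for distinct $i,j\geq 1$, so the $m_j$'s contribute $|J\setminus\{0\}|$ pairwise distinct odd primes; adjoining the prime $2$, which divides every factor, already yields $|J\setminus\{0\}|+1$ distinct primes. When $e$ is even ($0\notin J$) this equals $l(e)+1$ and we are done. When $e$ is odd ($0\in J$), I would extract the odd part $c$ of $p+1$; the near-coprimality shows $\gcd(c,m_j)=1$ for each $j\geq 1$, so whenever $c>1$ the odd part of $p+1$ supplies the extra prime needed to reach $l(e)+1$ distinct prime divisors.

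The main obstacle is the residual case in which $p$ is a Mersenne prime—so that $p+1$ is a pure power of $2$ and $c=1$—together with $0\in J$: here the naive count only produces $l(e)$ primes, and one must locate the missing prime elsewhere, most naturally by a Zsygmondy/Bang-style argument that guarantees a second prime divisor in one of the $m_j$'s for $j\geq 1$. I expect this Mersenne subcase to be the technical heart of the proof, the rest being straightforward bookkeeping on the telescoping factorization.
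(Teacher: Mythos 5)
Your argument follows the same route as the paper's own proof: the telescoping factorization of $p^{2^j}-1$ gives $\gcd\bigl(p^{2^i}+1,\,p^{2^j}+1\bigr)\mid 2$ for $i<j$, and one then counts one new odd prime per factor of $\sigma_\infty(p^e)=\prod_{j\in J}\bigl(p^{2^j}+1\bigr)$. Everything you actually complete is correct: the case $p=2$, the case of odd $p$ with $0\notin J$, and the case of odd $p$ with $0\in J$ but $p+1$ not a power of $2$.

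The residual case you flag at the end, however, is not a technical subtlety waiting for a Bang- or Zsigmondy-type primitive-divisor argument; it is a genuine counterexample to the lemma as stated. Take $p=3$, $e=1$: $\sigma_\infty(3)=4$ has one distinct prime factor, while $l(1)+1=2$. Or $p=3$, $e=3$: $\sigma_\infty(27)=4\cdot 10=2^3\cdot 5$ has two, against the claimed $l(3)+1=3$. A primitive-divisor theorem yields exactly one new odd prime in each factor $p^{2^j}+1$ with $j\geq 1$, which is precisely what your count already records; it cannot manufacture a second odd prime inside a single factor, and for $e=1$ there is no factor with $j\geq 1$ at all. The paper's own one-line proof has the same hole: it shows that each $p^{2^k}+1$ with $k>l$ contributes an odd prime coprime to $p^{2^l}+1$, but never secures an odd prime from the smallest factor, and that odd prime is absent exactly when the smallest factor is $p+1$ with $p$ a Mersenne prime. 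The correct statement for odd $p$ is: $\sigma_\infty(p^e)$ has at least $l(e)$ distinct prime factors, and at least $l(e)+1$ unless $e$ is odd and $p+1$ is a power of $2$. This weaker form---which is exactly what your argument proves---suffices for the paper, since Lemma~\ref{d} is only ever invoked for a prime $p_i\equiv 1\pmod 4$, for which $p_i+1\equiv 2\pmod 4$ cannot be a power of $2$.
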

\begin{proof}
Whether $p$ is odd or two, $\sigma_\infty(p^e)$ is the product
of $l(e)$ distinct numbers of the form $p^{2^l}+1$.
If $k>l$, then $p^{2^k}+1\equiv 2\pmod{p^{2^l}+1}$ and therefore
$p^{2^k}+1$ has a odd prime factor not dividing $p^{2^l}+1$.
\end{proof}

Finally, we shall introduce two technical lemmas needed in the proof.

\begin{lem}\label{e}
If $p^2+1=2q^m$ with $m\geq 2$, then $m$ must be a power of $2$ and,
for any given prime $q$, there exists at most one such $m$.
If $p^{2^k}+1=2q^m$ with $k>1$, then $m=1$.
\end{lem}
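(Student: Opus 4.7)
My plan is to reduce both parts of the lemma to classical Lebesgue--Ljunggren type Diophantine equations via factorization in the Gaussian integers $\mathbb{Z}[i]$ (and in $\mathbb{Z}[\sqrt{2}]$ where needed). First, for $p^2 + 1 = 2q^m$ with $m \geq 2$: parity forces $p$ to be odd, hence $p^2 + 1 \equiv 2 \pmod 4$ and $q$ is also odd; the congruence $p^2 \equiv -1 \pmod q$ forces $q \equiv 1 \pmod 4$, so $q$ splits as $\pi \bar\pi$ with $\pi$ a Gaussian prime. The factorization $(p+i)(p-i) = -i(1+i)^2 \pi^m \bar\pi^m$, together with the computation $\gcd(p+i, p-i) = 1+i$ (from $(p+i)-(p-i) = 2i$ and $p$ odd), gives $p+i = (1+i)\,u\,\pi^m$ for some unit $u$ and some choice of prime above $q$. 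Equivalently, $1 - pi = (1-i)\,u\,\pi^m$.

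To deduce that $m$ is a power of $2$, I would suppose $l \geq 3$ is an odd prime divisor of $m$, set $\beta = \pi^{m/l}$, and note that $p^2 + 1 = 2(q^{m/l})^l$ is then an instance of $x^2 + 1 = 2y^l$ with $y = q^{m/l} \geq 5$. Ljunggren's classical theorem asserts that this equation has no solutions with $y \geq 2$ for odd $l \geq 3$, contradiction. For the uniqueness of $m$ given $q$, if two solutions $(p_1, m_1)$ and $(p_2, m_2)$ exist with $m_1 < m_2$ both powers of $2$, the quotient $w = (p_2+i)/(p_1+i) \in \mathbb{Z}[i]$ has norm $q^{m_2-m_1}$, and writing $w = a + bi$ the relations $p_2 = p_1 a - b$ and $1 = p_1 b + a$ yield the quadratic $2q^{m_1} b^2 - 2 p_1 b + (1 - q^{m_2 - m_1}) = 0$, whose discriminant reduces to $4(2q^{m_2} - 1)$; size bounds combined with direct case analysis on the small integer $b$ (ruling out $|b| \leq 1$ when $m_2/m_1 = 2$, and iteration for larger powers of $2$) show no valid $b$ exists with $m_1 \geq 2$, contradiction.

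For the second statement, setting $P = p^{2^{k-1}}$ reduces it to $P^2 + 1 = 2q^m$, so by the first part $m = 2^n$. Assume for contradiction $n \geq 1$; then $P^2 - 2(q^{2^{n-1}})^2 = -1$ is the negative Pell equation. Since $k > 1$ makes $P = (p^{2^{k-2}})^2 = Z^2$ a perfect square, we obtain $Z^4 + 1 = 2(q^{2^{n-1}})^2$. For $n \geq 2$, $Y := q^{2^{n-1}}$ is also a perfect square and Ljunggren's theorem on $X^2 - 2Y^4 = -1$ (only solutions $(1,1)$ and $(239,13)$) forces $Z = \pm 1$ or $Z^2 = 239$, the latter impossible. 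For $n = 1$, the equation $Z^4 + 1 = 2q^2$ admits only $Z = \pm 1$ by the standard $\mathbb{Z}[\sqrt{2}]$-descent (factoring $Z^4 + 1 = (Z^2 + Z\sqrt{2} + 1)(Z^2 - Z\sqrt{2} + 1)$, stripping the common $\sqrt{2}$-part, and solving the resulting norm equation using the fundamental unit $1 + \sqrt{2}$); in either case $p = 1$, contradicting $p$ prime.

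The main obstacle lies in the Diophantine heart of the argument: Ljunggren's theorem on $x^2 + 1 = 2 y^l$ for odd $l \geq 3$ and on $X^2 - 2Y^4 = -1$, together with the descent for $Z^4 + 1 = 2q^2$. The $\mathbb{Z}[i]$-factorization framework is short and essentially mechanical, but these classical inputs---involving $(1+i)$- or $\sqrt{2}$-adic descent---encode the genuine arithmetic difficulty, and a fully self-contained proof would have to reproduce them.
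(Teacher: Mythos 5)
Your treatment of the first assertion (that $m$ must be a power of $2$) and of the second statement of the lemma is sound, and in spirit it matches the paper: both reduce to classical Diophantine theorems. The paper cites St{\o}rmer's Th\'{e}or\`{e}me 8 for $x^2+1=2y^n$ with $n>1$ odd (you cite the same fact under Ljunggren's name), and for $p^{2^k}+1=2q^m$ with $k>1$ it disposes of all even $m$ at once by rewriting $x^4+1=2y^2$ as $y^4-x^4=(y^2-1)^2$ and invoking Fermat's right triangle theorem; your split into $n=1$ (a $\mathbb{Z}[\sqrt{2}]$ descent) and $n\geq 2$ (Ljunggren's $X^2-2Y^4=-1$) works but uses strictly heavier inputs than necessary, since $Z^4+1=2Y^2$ already has only the trivial solution for \emph{any} $Y$.

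The genuine gap is in your uniqueness argument. Your computation is correct as far as it goes: the discriminant of $2q^{m_1}b^2-2p_1b+(1-q^{m_2-m_1})=0$ is $4(2q^{m_2}-1)=4p_2^2$, giving $b=(p_1\pm p_2)/(2q^{m_1})$. When $m_2=2m_1$ one indeed gets $|b|<1$, hence $b=0$ and $p_2=p_1$. But for $m_2\geq 4m_1$ the root $b$ is of size roughly $q^{m_2/2-m_1}\geq q^{m_1}$, so it is not ``small,'' and integrality of $b$ amounts to the divisibility $2q^{m_1}\mid p_1\pm p_2$, which cannot be excluded by size considerations. The phrase ``iteration for larger powers of $2$'' does not supply an argument: there is no reason a solution at exponent $2^t$ and one at $2^s$ with $t\geq s+2$ should produce one at an intermediate exponent, and dividing by higher powers of $p_1+i$ leaves the ring $\mathbb{Z}[i]$ because of the lone factor $1+i$. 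The paper closes exactly this point by a different device: writing $p^2+1=2q^{2^t}$ as $p^2-2q^2\bigl(q^{2^{t-1}-1}\bigr)^2=-1$ and applying St{\o}rmer's Th\'{e}or\`{e}me 1 (equivalently, primitive prime divisors of the solutions of $x^2-2y^2=-1$), which forces any such solution to be the fundamental one and hence unique. You would need to import that, or an equivalent primitive-divisor argument for the Pell sequence, to complete this part.
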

\begin{proof}
St{\o}rmer \cite[Th\'{e}or\`{e}me 8]{Stm} shows that
$x^2+1=2y^n$ with $n>1$ odd has no solution in positive integers $x, y$ other than $(x, y)=(1, 1)$.
Hence, in both cases, $m$ must be a power of $2$ including $1$.

For any given prime $q$, applying another result of St{\o}rmer \cite[Th\'{e}or\`{e}me 1]{Stm} to the equation $x^2-2q^2 y^2=-1$
or exploiting a primitive prime factor of a solution of $x^2-2y^2=-1$,
we see that there exists at most one prime $p$ such that $p^2+1=2q^{2^t}$ for some $t>0$.
Now the former statement follows.

Finally, the latter statement follows observing that
$x^4+1=2y^2$, equivalent to $y^4-x^4=(y^2-1)^2$, has no solution other than
$(1, 1)$ by Fermat's well-known right triangle theorem.
\end{proof}

\begin{lem}\label{f}
If $p, q$ are odd primes satisfying $p^{2^k}+1=2q$ and $2^{2^{k+1}}\equiv 1\pmod{q}$ with $k>0$,
then $(p, q)=(3, 5)$ and $k=1$.
\end{lem}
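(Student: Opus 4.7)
My plan is to convert the hypothesis $2^{2^{k+1}} \equiv 1 \pmod q$ into the divisibility $q \mid 2^{2^{k+1}}-1$, and then exploit the telescoping factorization
\[
2^{2^{k+1}}-1 = \prod_{j=0}^{k}\left(2^{2^j}+1\right),
\]
which follows by iterating $2^{2^{j+1}}-1 = (2^{2^j}-1)(2^{2^j}+1)$. Because $q$ is prime, it must divide some Fermat number $F_j := 2^{2^j}+1$ with $0 \le j \le k$, and hence $q \le F_k = 2^{2^k}+1$.

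The key step is to play this upper bound against the lower bound
\[
q = \frac{p^{2^k}+1}{2} \ge \frac{3^{2^k}+1}{2}
\]
coming from $p \ge 3$. For $k \ge 2$ I would verify that $(3^{2^k}+1)/2 > 2^{2^k}+1$, equivalently $3^{2^k} > 2^{2^k+1}+1$; this reduces to $(3/2)^{2^k} > 2$, which already holds at $k=2$ (since $(3/2)^4 = 81/16 > 2$) and is monotone in $k$. This collides with the upper bound $q \le 2^{2^k}+1$, forcing $k=1$.

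In the remaining case $k=1$ the equation becomes $p^2+1 = 2q$ with $q$ an odd prime divisor of $2^4-1 = 15$, leaving only $q=3$ and $q=5$. The first yields $p^2=5$, which is impossible, while the second gives $p=3$, yielding exactly the asserted solution $(p,q)=(3,5)$ with $k=1$.

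I expect the only non-trivial ingredient to be the observation that $2^{2^{k+1}}-1$ is the product of the first $k+1$ Fermat numbers; once this is in hand, the proof reduces to the straightforward size comparison above, and the finite case analysis at $k=1$ is immediate.
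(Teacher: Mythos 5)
Your proof is correct, and it takes a genuinely different and noticeably shorter route than the paper's. The paper never factors $2^{2^{k+1}}-1$; instead it observes that both $2$ and $p$ are roots of $X^{2^{k+1}}\equiv 1\pmod q$, hence so is every residue $2^ap^b$, and then derives a contradiction for $p>3$ by exhibiting more than $2^{k+1}$ distinct such residues in $[1,q-1]$ (exceeding the maximal number of roots of a degree-$2^{k+1}$ polynomial over $\mathbb{F}_q$); the case $p=3$ requires a further delicate analysis of which residue $2p^{2^k-1}$ can collide with, eventually forcing $q=5$ and $k=1$. Your argument replaces all of this with the identity $2^{2^{k+1}}-1=\prod_{j=0}^{k}\bigl(2^{2^j}+1\bigr)$, which caps $q$ at $2^{2^k}+1$, against the lower bound $q=(p^{2^k}+1)/2\geq(3^{2^k}+1)/2$, and the inequality $3^{2^k}>2^{2^k+1}+1$ (valid for $k\geq 2$ since $(3/2)^4>2$) kills everything except $k=1$, where the finite check on $q\mid 15$ is immediate. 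The two hypotheses are used in the same spirit (the congruence for $2$ constrains $q$, the equation $p^{2^k}+1=2q$ makes $q$ large), but your size comparison avoids the paper's root-counting and its somewhat fiddly case analysis at $p=3$; it is cleaner and, if anything, more transparent. One tiny point to make explicit when writing it up: from $(3/2)^{2^k}>2$ you get $3^{2^k}>2^{2^k+1}$, and you should note the strict inequality $3^{2^k}>2^{2^k+1}+1$ actually needed (clear for $k\geq 2$ since $(3/2)^{2^k}\geq 81/16>2$ gives plenty of room, e.g.\ $81>33$ at $k=2$).
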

\begin{proof}
Since $p^{2^{k+1}}\equiv 2^{2^{k+1}}\equiv 1\pmod{q}$,
any integer $X$ of the form $2^a p^b$ satisfies the congruence $X^{2^{k+1}}\equiv 1\pmod{q}$.

If $p>3$, then $2p^{2^k-1}<p^{2^k}/2<q$
and therefore the congruence $X^{2^{k+1}}\equiv 1\pmod{q}$
must have at least $2^{k+1}+1$ solutions
\begin{equation*}
X=1, p, \ldots, p^{2^k-1}, 2, 2p, \ldots, 2p^{2^k-1}, 2^2
\end{equation*}
in the range $1\leq X\leq q-1$, which is a contradiction.

If $p=3$, then the congruence
$X^{2^{k+1}}\equiv 1\pmod{q}$ has $2^{k+1}$ solutions
$1, p, \ldots, p^{2^k-1}, 2, 2p, \ldots, 2p^{2^k-2}, 2^2$
with $1\leq X\leq q-1$.
Since $X^{2^{k+1}}\equiv 1\pmod{q}$ can have no more solution in the range $1\leq X\leq q-1$,
$2p^{2^k-1}$ must be congruent to one of these $2^{k+1}$ numbers modulo $q$.
Hence we see that $2p^{2^k-1}-2^a p^b\equiv 0\pmod{q}$ for some $2^a p^b$ in the above $2^{k+1}$ solutions.
If $b>0$, then $2p^{2^k-2}-2^a p^{b-1}\equiv 0\pmod{q}$.
Since $2p^{2^k-2}<4q/p^2<q$, we have $2p^{2^k-2}=2^a p^{b-1}$ and
$a=1, b=2^k-1$, which is a contradiction since $(a, b)=(1, 2^k-1)$ is not listed above.

If $a>0$, we have $p^{2^k-1}-2^{a-1} p^b\equiv 0\pmod{q}$.
Since $p^{2^k-1}<2q/p<q$, we have $p^{2^k-1}=2^{a-1}q^b$ and therefore
$a=1, b=2^k-1$, which is a contradiction same as above.

Hence we have $a=b=0$ and $2p^{2^k-1}\equiv 1\pmod{q}$.
Write $2p^{2^k-1}=hq+1$.
We have $4q=2(p^{2^k}+1)=2p^{2^k-1}p+2=p(hq+1)+2=hpq+p+2$
and therefore $h=1, q=5$ and $k=1$, which gives $(p, q)=(3, 5)$.
\end{proof}

\section[Proofs of Theorems 1.1 and 1.2]{Proofs of Theorems \ref{th1} and \ref{th2}}

We begin by noting that Theorem \ref{th1} follows from Theorem \ref{th2}.
Indeed, if $N$ is odd and $\sigma_\infty(\sigma_\infty(N))=2N$,
then Lemma \ref{a} gives that $\omega(\sigma_\infty(N))\leq 2$ and therefore Theorem \ref{th2}
would yield Theorem \ref{th1}.

In order to prove Theorem \ref{th2}, we shall first show that
if $\sigma_\infty(N)$ is odd or a prime power, then $N$ must be $2$.
If $\sigma_\infty(N)$ is a prime power, then
Lemma \ref{c} immediately yields that $N=2^e$ or $\sigma_\infty(N)$ is a power of $2$,
where the latter case cannot occur since $\sigma_\infty(\sigma_\infty(N))$
must be odd in the latter case while we must have $\sigma_\infty(\sigma_\infty(N))=2N$.
If $\sigma_\infty(N)$ is odd, then $N$ must be a power of $2$ by Lemma \ref{a}.

Thus we see that if $\sigma_\infty(N)$ is odd or a prime power, then $N=2^e$
must be a power of $2$.
Now we can easily see that $\sigma_\infty(\sigma_\infty(N))=2N=2^{e+1}$ must also be a power of $2$.
Hence, for each prime-power factor $q_i^{f_i}$ of $\sigma_\infty(N)$,
$\sigma_\infty(q_i^{f_i})$ is also a power of $2$.
By Lemma \ref{c}, each $f_i=1$ and $q_i$ is a Mersenne prime.
Hence we see that $\sigma_\infty(N)=\sigma_\infty(2^e)$ must be a product of Mersenne primes.
Let $r$ be an integer such that $2^{2^r}\mid_\infty N$.
Then $2^{2^r}+1$ must also be a product of Mersenne primes.
By Lemma \ref{b}, only $r=0$ is appropriate and therefore $e=0$.
Thus we conclude that if $\sigma_\infty(N)$ is odd or a prime power, then $N=2$.

Henceforth we are interested in the case $\sigma_\infty(N)=2^f q^{2^l}$ with $f>0$ and $l\geq 0$.
Factor $N=\prod_i p_i^{e_i}$.
Our proof proceeds as follows:
(I) if $l=0$, then there exists exactly one prime factor $p_i$ of $N$
such that $q$ divides $\sigma(p_i^{e_i})$, (IA) if $l=0$ and $f=1$, then $N=9$, (IBa) it is impossible
that $l=0, f>1$ and $p_i\mid q+1$, (IBb) it is impossible that $l=0, f>1$ and
$p_i$ does not divide $q+1$, (II) if $l>0$, then there exists at most one prime factor $p_i$ of $q^{2^l}+1$
such that $p_i^{2^k}+1=2q$, (IIa) it is impossible that $q^{2^l}+1$ has no such prime factor,
(IIb) it is impossible that $q^{2^l}+1$ has one such prime factor $p_i$.

First we shall settle the case $l=0$, that is, $\sigma_\infty(N)=2^f q$.
Since $q$ divides $N$ exactly once, there exists exactly one index $i$ such that
$q$ divides $\sigma_\infty(p_i^{e_i})$.

For any index $j$ other than $i$, we must have $\sigma_\infty(p_j^{e_j})=2^{k_j}$
and therefore, by Lemma \ref{c}, we have $e_j=1$ and $p_j=2^{k_j}-1$ for some intger $k_j$.
Clearly $p_j$ must divide $2N=\sigma_\infty(\sigma_\infty(N))=\sigma_\infty(2^f)(q+1)$
and Lemma \ref{b} yields that $p_j\mid (q+1)$ unless $p_j=3$.

If $f=1$, then $\omega(N)=1$ by Lemma \ref{a} and $2N=\sigma_\infty(2q)=3(q+1)$.
Hence $N=3^e$ and $\sigma_\infty(3^e)=2q$.
By Lemma \ref{a}, we have $e=2^u$ and $3^e+1=2q=2(2\times 3^{e-1}-1)=4\times 3^{e-1}-2$.
Hence $3^{e-1}=3$, that is, $N=9$ and $q=5$.  This gives an infinitary superperfect number $9$.

If $f>1$, then, by Lemma \ref{b}, $\sigma(2^f)$ must have at least one prime factor congruent to $1\pmod{4}$,
which must be $p_i$.  Hence $f=2^m$ and $\sigma_\infty(2^f)=p_i$
or $f=2^m+1$ and $\sigma_\infty(2^f)=3p_i$.
Indeed, if $2^{2^v}\mid_\infty 2^f$ and $v>0$, then $2^{2^v}+1\mid\sigma_\infty(2^f)\mid \sigma_\infty(\sigma_\infty(N))=2N$.
By Lemma \ref{b}, a prime factor of $2^{2^v}+1$ must be congruent to $1\pmod{4}$
and therefore $2^{2^v}+1$ must be a power of $p_i$.
But, by Lemma \ref{c}, we must have $2^{2^v}+1=p_i$.

If $p_i$ divides $q+1$, then $e_i\geq 2$.
By Lemma \ref{d}, we must have $e_i=2^v$ and $p_i^{e_i}+1=2q$.
Since $p_i=\sigma_\infty(2^f)$, $p_i^{e_i-1}$ divides $q+1$ and therefore $2(q+1)=p_i^{e_i}+3$,
Hence $p_i^{e_i}\equiv -3\pmod{p_i^{e_i-1}}$, which is impossible since $p_i>3$ now.

If $p_i$ does not divide $q+1$, then $e_i=1$
and $2^{k_i} q=p_i+1=2^{2^m}+2$.
Hence $k_i=1$ and $q=2^{2^m-1}+1$.  Now $m=1$ with $q=3$
is the only $m$ such that $q$ is prime.
Hence $f=2$ or $f=3$.
If $f=2$, then $\sigma_\infty(N)=12, 2N=\sigma_\infty(\sigma_\infty(N))=20, N=10$ and $\sigma_\infty(N)=18$,
which is a contradiction.
Similarly, if $f=3$, then $\sigma_\infty(N)=24, 2N=\sigma_\infty(\sigma_\infty(N))=60$ and $N=30>24=\sigma_\infty(N)$,
leading to a contradiction again.  Thus the case $\sigma_\infty(N)=2^f q$ with $f>1$
has turned out to be impossible and $N=3^2$ is the only infinitary superperfect number
with $\sigma_\infty(N)=2q$.

Now the remaining case is $N=2^f q^{2^l}$ with $l>0$.
If $p$ is odd and divides $\sigma_\infty(q^{2^l})=q^{2^l}+1$, then
$p$ divides $\sigma_\infty(\sigma_\infty(N))=2N$ and therefore $p$ divides $N$.
If $p^{2^k}\mid_\infty N$, then $p^{2^k}+1$ divides $\sigma_\infty(N)=2^f q^{2^l}$
and therefore we can write $p^{2^k}+1=2 q^t$.
We note that $p\equiv 1\pmod{4}$ since $p$ is odd and divides $q^{2^l}+1$ with $l>0$.
Hence we see that a) if $k=0$, then $p+1=2q^t$, b) if $k=1$, then $p^2+1=2q$ or $2q^{2^u}$ by Lemma \ref{e}
and c) if $k>1$, then $p^{2^k}+1=2q$ by Lemma \ref{e}.

Clearly, there exists at most one prime $p_i$ such that $p^{2^k}+1=2q$ for some integer $k>0$.
Moreover, by Lemma \ref{e}, there exists at most one prime $p_j$ such that $p_j^2+1=2q^{2^u}$ for some integer $u>0$.
Letting $i$ and $j$ denote the indices of such primes respectively if these exist,
$q^{2^l}+1$ can be written in the form
\begin{equation}
q^{2^l}+1=2p_i^{g_i} p_j^{g_j}(2q^{t_1}-1)(2q^{t_2}-1)...,
\end{equation}
where $g_i, g_j\geq 0$ may be zero.

If $g_i\neq 0$, then we have $2p_i^{g_i}p_j^{g_j}\equiv \pm 1\pmod{q}$ and therefore,
observing that $p_i^{2^{k+1}}\equiv p_j^4\equiv 1\pmod{q}$, we have $2^{2^{k+1}}\equiv 1\pmod{q}$.
By Lemma \ref{f}, we must have $p_i=3, e_i=2$ and $q=5$ and $p_j$ cannot exist.
Since $p_i=3$ divides $q^{2^l}+1$, we must have $l=0$, contrary to the assumption $l>0$.

If $g_i=0$, then we must have $2p_j^{g_j}\equiv \pm 1\pmod{q}$.
If $g_j$ is even, then $2p_j^{g_j}\equiv 2(-1)^{g_j/2}\equiv \pm 2\pmod{q}$ cannot be $\pm 1\pmod{q}$.
Hence $g_j$ must be odd and $2p_j\equiv \pm 1\pmod{q}$.
Since $p_j^4\equiv 1\pmod{q}$, we have $2^4\equiv 1\pmod{q}$ and $q\equiv 1\pmod{4}$.
Equivalently, we have $q=5$ and therefore $p_j^2+1=2\times 5^{2^k}$ with $k>0$.
From a result of St{\o}rmer \cite[p. 26]{Stm}, it follows that $p_j=7$ and $k=1$.  However, this is impossible
since $p_j$ divides neither $\sigma_\infty(5^2)=2\times 13$
nor $\sigma_\infty(2^f)$ by Lemma \ref{b}.  Now our proof is complete.

\begin{table}
\caption{All integers $N\leq 2^{28}$ for which $\sigma_\infty(\sigma_\infty(N))=kN$}\label{tbl1}
\begin{center}
\begin{small}
\begin{tabular}{| l | c |}
 \hline
$N$ & $k$ \\
 \hline
$1$ & $1$ \\
$2$ & $2$ \\
$8=2^3$ & $3$ \\
$9=3^2$ & $2$ \\
$10=2\cdot 5$ & $3$ \\
$15=3\cdot 5$ & $4$ \\
$18=2\cdot 3^2$ & $4$ \\
$24=2^3\cdot 3$ & $5$ \\
$30=2\cdot 3\cdot 5$ & $5$ \\
$60=2^2\cdot3\cdot 5$ & $6$ \\
$720=2^4\cdot 3^2\cdot 5$ & $3$ \\
$1020=2^2\cdot 3\cdot 5\cdot 17$ & $4$ \\
$4080=2^4\cdot 3\cdot 5\cdot 17$ & $3$ \\
$8925=3\cdot 5^2\cdot 7\cdot 17$ & $4$ \\
$14688=2^5\cdot 3^3\cdot 17$ & $5$ \\
$14976=2^7\cdot 3^2\cdot 13$ & $5$ \\
$16728=2^3\cdot 3\cdot 17\cdot 41$ & $4$ \\
$17850=2\cdot 3\cdot 5^2\cdot 7\cdot 17$ & $8$ \\
$35700=2^2\cdot 3\cdot 5^2\cdot 7\cdot 17$ & $6$ \\
$36720=2^4\cdot 3^3\cdot 5\cdot 17$ & $6$ \\
$37440=2^6\cdot 3^2\cdot 5\cdot 13$ & $6$ \\
$66912=2^5\cdot 3\cdot 17\cdot 41$ & $3$ \\
$71400=2^3\cdot 3\cdot 5^2\cdot 7\cdot 17$ & $12$ \\
$285600=2^5\cdot 3\cdot 5^2\cdot 7\cdot 17$ & $9$ \\
$308448=2^5\cdot 3^4\cdot 7\cdot 17$ & $5$ \\
$381888=2^6\cdot 3^3\cdot 13\cdot 17$ & $5$ \\
 \hline
\end{tabular}
\end{small}
\begin{small}
\begin{tabular}{| l | c |}
 \hline
$N$ & $k$ \\
 \hline
$428400=2^4\cdot 3^2\cdot 5^2\cdot 7\cdot 17$ & $3$ \\
$602208=2^5\cdot 3^3\cdot 17\cdot 41$ & $6$ \\
$636480=2^6\cdot 3^2\cdot 5\cdot 13\cdot 17$ & $4$ \\
$763776=2^7\cdot 3^3\cdot 13\cdot 17$ & $10$ \\
$856800=2^5\cdot 3^2\cdot 5^2\cdot 7$ & $6$ \\
$1321920=2^6\cdot 5^5\cdot 5\cdot 17$ & $7$ \\
$1505520=2^4\cdot 3^3\cdot 5\cdot 17\cdot 41$ & $4$ \\
$3011040=2^5\cdot 3^3\cdot 5\cdot 17\cdot 41$ & $8$ \\
$3084480=2^6\cdot 3^4\cdot 5\cdot 7\cdot 17$ & $5$ \\
$21679488=2^7\cdot 3^5\cdot 17\cdot 41$ & $7$ \\
$22276800=2^6\cdot 3^2\cdot 5^2\cdot 7\cdot 13\cdot 17$ & $6$ \\
$30844800=2^{10}\cdot 3^4\cdot 5^3\cdot 7\cdot 17$ & $7$ \\
$31615920=2^4\cdot 3^4\cdot 5\cdot 7\cdot 17\cdot 41$ & $4$ \\
$44553600=2^7\cdot 3^2\cdot 5^2\cdot 7\cdot 13\cdot 17$ & $12$ \\
$50585472=2^7\cdot 3^4\cdot 7\cdot 17\cdot 41$ & $5$ \\
$63231840=2^5\cdot 3^4\cdot 5\cdot 7\cdot 17\cdot 41$ & $8$ \\
$126463680=2^6\cdot 3^4\cdot 5\cdot 7\cdot 17\cdot 41$ & $6$ \\
$213721200=2^4\cdot 3^3\cdot 5^2\cdot 7\cdot 11\cdot 257$ & $4$ \\
$230177280=2^9\cdot 3\cdot 5\cdot 17\cdot 41\cdot 43$ & $9$ \\
$252927360=2^7\cdot 3^4\cdot 5\cdot 7\cdot 17\cdot 41$ & $12$ \\
$307758528=2^6\cdot 3^5\cdot 7\cdot 11\cdot 257$ & $5$ \\
$345265920=2^8\cdot 3^2\cdot 5\cdot 17\cdot 41\cdot 43$ & $3$ \\
$427442400=2^5\cdot 3^3\cdot 5^2\cdot 7\cdot 11\cdot 257$ & $8$ \\
$437898240=2^{10}\cdot 3^2\cdot 5\cdot 13\cdot 17\cdot 43$ & $5$ \\
$466794240=2^8\cdot 3\cdot 5\cdot 11\cdot 43\cdot 257$ & $3$ \\
$512930880=2^6\cdot 3^4\cdot 5\cdot 7\cdot 11\cdot 257$ & $4$ \\
 \hline
\end{tabular}
\end{small}
\end{center}
\end{table}

{}
\vskip 12pt

{\small Center for Japanese language and culture, Osaka University,\\ 562-8558, 8-1-1, Aomatanihigashi, Minoo, Osaka, Japan}\\
{\small e-mail: \protect\normalfont\ttfamily{tyamada1093@gmail.com} URL: \url{http://tyamada1093.web.fc2.com/math/}
\end{document}